\documentclass[11pt]{amsart}
\pdfoutput=1
\pdfsuppresswarningpagegroup=1

\usepackage[T1]{fontenc}
\usepackage[utf8]{inputenc}
\usepackage[english]{babel}
\usepackage{amsmath, amssymb}
\usepackage{graphicx}
\usepackage{amsthm}
\usepackage{array}
\usepackage{amsthm}
\usepackage{mathtools}
\usepackage{thmtools}
\usepackage{xspace}
\usepackage[margin=.85 in]{geometry}
\allowdisplaybreaks

\usepackage[dvipsnames]{xcolor}
\usepackage{hyperref}
\definecolor{candyapplered}{rgb}{1.0, 0.03, 0.0}
\definecolor{mediumblue}{rgb}{0.0, 0.0, 0.8}

\hypersetup{
    bookmarksnumbered,
    unicode=false,          
    pdftoolbar=true,        
    pdfmenubar=true,        
    pdffitwindow=false,     
    pdfstartview={FitH},    
    pdftitle={The sum of Lagrange numbers},    
    pdfauthor={Jonah Gaster and Brice Loustau},     
    pdfnewwindow=true,      
    colorlinks=true,       
    linkcolor=mediumblue,          
    citecolor=candyapplered,        
    filecolor=magenta,      
    urlcolor=blue          
}

\usepackage{caption,subcaption}
\RequirePackage[ocgcolorlinks]{ocgx2} 

\declaretheorem[numberwithin=section]{theorem}

\declaretheorem[sibling=theorem]{lemma}
\declaretheorem[sibling=theorem, style=remark]{remark}

\newcommand*{\ie}{i.e.\@\xspace}
\makeatletter
\newcommand*{\eg}{%
    \@ifnextchar{.}%
        {e.g.}%
        {e.g.\@\xspace}%
}
\newcommand*{\etc}{%
    \@ifnextchar{.}%
        {etc}%
        {etc.\@\xspace}%
}
\makeatother


\addto\extrasenglish{

}
\addto\extrasfrench{

}


\newcommand{\bZ}{\mathbb{Z}}
\newcommand{\bP}{\mathbb{P}}
\newcommand{\bN}{\mathbb{N}}

\newcommand{\Mod}{\mathrm{Mod}}
\newcommand{\Aut}{\mathrm{Aut}}
\newcommand{\bR}{\mathbb{R}}

\newcommand{\PGL}{\mathrm{PGL}}

\newcommand{\cL}{\mathcal{L}}
\newcommand{\cM}{\mathcal{M}}

\newcommand{\Fix}{\mathrm{Fix}}
\newcommand{\cS}{\mathcal{S}}

\newcommand{\upd}{\mathop{}\!\mathrm{d}}
\DeclareMathOperator{\sech}{sech}


\linespread{1.1}
\mathtoolsset{showonlyrefs=true}
\newcommand{\MUC}{MUC}

\begin{document}
\title[The sum of Lagrange numbers]{The sum of Lagrange numbers}
\author{Jonah Gaster and Brice Loustau}
\date{August 17, 2020}

\address{Department of Mathematical Sciences, University of Wisconsin-Milwaukee}
\email{gaster@uwm.edu}

\address{Mathematisches Institut, Universit\"at Heidelberg ~and~ Heidelberg Institute of Theoretical Studies}
\email{brice.loustau@uni-heidelberg.de}

\begin{abstract}
Combining McShane's identity on a hyperbolic punctured torus with Schmutz's work on the Markov Uniqueness Conjecture (\MUC{}),
we find that \MUC{} is equivalent to the identity 
\begin{equation}
\sum_{n=1}^\infty \, \left( 3- L_n \right) \, = \, 4 - \varphi - \sqrt 2
\end{equation}
where $L_n$ is the $n$th Lagrange number and $\varphi=\frac{1+\sqrt5}2$ is the golden ratio.
\end{abstract}

\maketitle

\section{Preliminaries}

\subsection{Lagrange and Markov numbers}

The \emph{Lagrange numbers} $\cL=\{L_n\}_{n=1}^\infty=\{\sqrt 5, \sqrt 8 , \ldots\}$ are a sequence of real numbers that naturally arise in Diophantine approximation. Hurwitz's theorem states that for any irrational number $x$, there exists a sequence of rationals $p_n/q_n$
converging to $x$ with
$\left| x - \frac{p_n}{q_n}\right| < \frac{1}{\sqrt{5} q_n^2}$. 
In this expression, $\sqrt{5}$ is optimal, as can be shown by taking $x = \varphi$ (the golden ratio). 
It turns out that when $x = \varphi$ and related numbers are excluded, $\sqrt{8}$ is the new best constant. 
By definition, $L_1 = \sqrt{5}$ is the first Lagrange number, $L_2 = \sqrt{8}$ is the second Lagrange number, \etc{}

The \emph{Markov numbers} $\cM =\{m_n\}_{n=1}^\infty = \{1, 2, 5, 13, \ldots\}$ are the positive integers that appear in a Markov triple, \ie{} a solution $(x,y,z)\in \bZ^3$ to the cubic 
\begin{equation} \label{eq:Markov}
x^2+y^2+z^2=3xyz\,.
\end{equation}
In 1880, Markov \cite{Markoff, MR1510073} discovered a remarkable connection between this cubic and the theory of binary quadratic forms, and proved the unexpected relation between Markov and Lagrange numbers:
\begin{equation} \label{eq:defLagrange}
 L_n = \sqrt{9 - \frac{4}{m_n^2}}\,.
\end{equation}

Using the Vieta involution $(x,y,z) \mapsto (x,y,3xy-z)$, it is easy to see that for any Markov number $m$, one can always find a Markov triple $(x,y,m)$ with $0 < x \leqslant y \leqslant m$. The \emph{Markov Uniqueness Conjecture} (\MUC{}) asserts that such a triple is always unique. \MUC{} was initially offered by Frobenius in 1913 \cite{frobenius} and is notoriously difficult \cite{Guy}. 
For more context and detail, we refer to \cite{Aigner, cusick-flahive}.

\subsection{The sum of Lagrange numbers} It is clear from \eqref{eq:defLagrange} that $L_n$ is an increasing sequence of positive numbers that converges to $3$ when $n \to +\infty$. Moreover, we have $3 - L_n \sim \frac{2}{3m_n^2}$, and since $m_n \geqslant n$ (actually $m_n$ is much greater, see \autoref{sec:Numerical}), the series $\sum_{n=1}^\infty (3 - L_n)$ is convergent. In this paper, we prove: 
\begin{theorem} \label{thm:MainThm}
The Markov Uniqueness Conjecture holds if and only if 
\begin{equation}
\label{eq:LagrangeSum}
\sum_{n=1}^\infty \, \left( 3- L_n \right) \, = \, 4 - \varphi - \sqrt 2\,.
\end{equation}
\end{theorem}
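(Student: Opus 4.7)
The plan is to combine McShane's identity on the modular once-punctured torus with the Markov--Cohn correspondence and Schmutz's characterization of \MUC{}, and then reindex the resulting sum by Markov number to extract the claimed identity.

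First, I would apply McShane's identity to the modular torus $T$ (the unique complete hyperbolic structure on the once-punctured torus invariant under the full action of $\PSL(2,\bZ)$ on Teichm\"uller space):
\[
\sum_\gamma \frac{1}{1 + \exp \ell(\gamma)} \, = \, \frac{1}{2},
\]
where $\gamma$ runs over isotopy classes of simple closed geodesics. By the Markov--Cohn correspondence, each $\gamma$ carries a Markov number $m(\gamma)$ satisfying the trace--length relation $2\cosh(\ell(\gamma)/2) = 3 m(\gamma)$. A short algebraic manipulation (solve $u + 1/u = 3m$ for $u = e^{\ell/2}$ and rationalize) yields the term-wise identification
\[
\frac{1}{1 + e^{\ell(\gamma)}} \, = \, \frac{3 - L_{m(\gamma)}}{6}\,,
\]
converting McShane's identity into $\sum_\gamma (3 - L_{m(\gamma)}) = 3$.

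The next step is to reindex this sum by Markov number. The isometry group of $T$ acts on isotopy classes of simple closed curves through the standard $S_3$-action on slopes $\bQ\bP^1$ that permutes $\{0,1,\infty\}$; a direct check confirms that every orbit has size $3$ or $6$, and that the orbits of size $3$ are exactly $\{0,1,\infty\}$ (carrying Markov number $1$) and $\{-1, 1/2, 2\}$ (carrying Markov number $2$). Schmutz's work then supplies the crucial input: \MUC{} is equivalent to the statement that distinct $S_3$-orbits of slopes carry distinct Markov numbers, or equivalently that the multiplicity $N_n$ of isotopy classes of simple closed geodesics on $T$ with Markov number $m_n$ satisfies $N_1 = N_2 = 3$ and $N_n = 6$ for all $n \geqslant 3$.

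Under \MUC{}, the converted McShane identity then reads
\[
3(3 - L_1) + 3(3 - L_2) + 6 \sum_{n \geqslant 3}(3 - L_n) \, = \, 3\,,
\]
which rearranges to $\sum_n (3 - L_n) = (7 - \sqrt{5})/2 - \sqrt{2} = 4 - \varphi - \sqrt{2}$ after substituting $L_1 = \sqrt{5}$ and $L_2 = 2\sqrt{2}$. Conversely, a failure of \MUC{} contributes an extra $S_3$-orbit sharing some Markov number, strictly increasing the corresponding $N_n$ and so strictly changing the resulting value of $\sum_n (3 - L_n)$ under the same rearrangement. The principal difficulty lies in this reindexing step --- importing Schmutz's theorem to pin down the multiplicities $N_n$ and their equivalence with \MUC{}; the remaining ingredients (McShane's identity, the trace--length algebra, and the final arithmetic) are essentially computational.
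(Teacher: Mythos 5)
Your proposal is correct and follows essentially the same route as the paper: McShane's identity on the modular torus, the term-wise conversion $\frac{1}{1+e^{\ell(\gamma)}} = \frac{3-L(\gamma)}{6}$, the count of isometry-group orbits (size $3$ exactly for the two exceptional orbits carrying $m=1$ and $m=2$, size $6$ otherwise), and Schmutz's theorem to translate \MUC{} into the injectivity needed for the final reindexing and arithmetic. The only cosmetic differences are that you carry out the orbit computation in slope coordinates rather than on $\bP^1 H_1(X,\bZ)$ and phrase the conclusion via multiplicities $N_n$ rather than a sum over $\cS/\Aut(X)$.
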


The proof is easily derived from the McShane identity on a hyperbolic punctured torus 
and a result of Schmutz regarding the well-known relationship between hyperbolic geometry and Markov numbers.
It is nonetheless a striking identity, and could optimistically open a new path towards probing \MUC{}.

\begin{remark}
Several authors have explored similar ideas, for instance \cite{MR1356829}, \cite[\S 4.3]{Lang-Tan}. 
\end{remark}

\begin{remark}
Numerical computation confirms the identity \eqref{eq:LagrangeSum} convincingly, as we shall see in \autoref{sec:Numerical}. This is not surprising since \MUC{} has also directly been checked by computers for high values of $n$.
\end{remark}

\subsection{Markov numbers and the modular torus.} \label{subsec:RelationModularTorus}
The beautiful relationship between Markov numbers and hyperbolic geometry was discovered by Gorshkov \cite{Gorshkov} and Cohn \cite{MR67935}. Let $T^*$ denote the once-punctured torus, \ie{} the topological surface obtained by removing a point from the torus $T^2$. For a certain hyperbolic metric on $T^*$, the lengths of simple closed geodesics on $T^*$ are given by the Markov numbers. We briefly explain this connection and refer to \eg{} \cite{Series} for more discussion.

The \emph{character variety} of the once-punctured torus is the cubic surface $\mathcal{X}$ defined by the equation
\begin{equation} \label{eq:CharacterVariety}
 x^2 + y^2 + z^2 = xyz\,.
\end{equation}
Hyperbolic metrics on $T^*$ with finite volume correspond to real points of ${\mathcal X}$. Indeed, let $\pi_1(T^*) = \langle a, b\rangle$ where $a$ and $b$ are the standard generators of $\pi_1(T^2) \approx \bZ^2$. Hyperbolic structures on $T^*$ are parametrized by $x = \operatorname{tr}(A)$, $y = \operatorname{tr}(B)$, $z = \operatorname{tr}(AB)$ where $A, B \in \operatorname{SL}_2(\bR)$ are (lifts of) the holonomies of $a, b \in \pi_1(T^*)$. The condition that the metric has finite volume amounts to the peripheral curve $aba^{-1}b^{-1}$ having parabolic holonomy, \ie{} $\operatorname{tr}(ABA^{-1}B^{-1}) = -2$. Using the classical trace relations in $\operatorname{SL}_2(\bR)$, this equation is rewritten $ x^2 + y^2 + z^2 = xyz$. We refer to \eg{} \cite{Goldman} for more details on this correspondence.

The integer solutions $(x,y,z) \in \bZ^3$ of \eqref{eq:CharacterVariety} are clearly in bijection with Markov triples: $x, y, z$ must all be divisible by $3$, and the reduced triple $(x/3, y/3, z/3)$ verifies \eqref{eq:Markov}. 
Thus Markov triples are the integral points of $\mathcal{X}$ (up to $1/3$). 
In fact, the mapping class group $\Mod(T^*)$ acts transitively on such triples, \ie{} all corresponding hyperbolic tori are isometric. This hyperbolic torus is called the \emph{modular torus} $X$, a 6-fold cover of the modular orbifold. 
Markov numbers can alternatively be described as one third of traces of simple closed geodesics on $X$:
\begin{equation}
 3 \, \cM = \left\{3 \,m_n, \,n \in \bN\right\} = \left\{\tau(\gamma) , \,\gamma \in \cS\right\}
\end{equation}
where we denote $\cS$ the set of simple closed geodesics on $X$ and $\tau(\gamma)$ the trace of the holonomy of $\gamma \in \cS$.

It is natural to ask whether for any $m \in \cM$, the geodesic $\gamma$ such that $\tau(\gamma) = 3m$ is unique up to an isometry of $X$. It was proved by Schmutz \cite{Schmutz} that this statement is equivalent to \MUC{}.

\subsection{Acknowledgments.}
We thank Ser Peow Tan and David Dumas for valuable feedback.

\bigskip
\section{Proof of the theorem}

Greg McShane showed that, for any finite-volume hyperbolic metric on the punctured torus $T^*$,
\begin{equation}
\label{eq:McShane}
\sum_{\gamma\in\cS} \frac1{1+e^{\ell(\gamma)}} = \frac12
\end{equation}
where $\cS$ is the set of simple closed geodesics and $\ell(\gamma)$ indicates the length of $\gamma$ \cite{McShane}. Recalling that the trace and length of $\gamma$ are related by $\tau(\gamma) = 2\cosh(\ell(\gamma)/2)$, McShane's identity can be rewritten
\begin{align} \label{eq:Massage}
1&=\sum_\gamma \, \frac2{1+e^{\ell(\gamma)}} = \sum_\gamma \, e^{-\ell(\gamma)/2}\sech(\ell(\gamma)/2) \\
\nonumber
&= \sum_\gamma \, \frac2{\tau(\gamma) + \sqrt{\tau(\gamma)^2-4}}\cdot \frac2{\tau(\gamma)}
= \sum_\gamma \, 1- \sqrt{1-\frac4{\tau(\gamma)^2}}\,.
\end{align}
When $T^*$ with its hyperbolic metric is chosen to be the modular torus $X$, let us denote
$m(\gamma) \coloneqq \tau(\gamma)/3$ the associated Markov number (see \autoref{subsec:RelationModularTorus}) and $L(\gamma) \coloneqq \sqrt{9 - \frac{4}{m(\gamma)^2}}$ the associated Lagrange number. 
Reworking \eqref{eq:Massage}, McShane's identity on the modular torus is simply rewritten:
\begin{equation} \label{eq:McShaneRe}
 \sum_{\gamma \in \cS} \, (3 - L(\gamma)) \,= 3\,.
\end{equation}

It remains to investigate the fibers of the map $\gamma\mapsto L(\gamma)$ from simple closed geodesics on $X$ to Lagrange numbers. 
It is not hard to show that all fibers are nonempty: this is because
Vieta involutions act transitively on the Markov tree, and act as mapping classes on $\cS$.
By Schmutz's theorem \cite{Schmutz}, \MUC{} is equivalent to each fiber of $\gamma\mapsto L(\gamma)$ being the $\Aut(X)$-orbit of a single simple closed geodesic on $X$.
To finish the proof of \autoref{thm:MainThm}, we just need to count the number of elements of each orbit.

\begin{lemma} \label{lem:Orbits}
Let $\cS_0\subset \cS$ indicate the six shortest geodesics on $X$, and let $\cS_1=\cS - \cS_0$. Each orbit $\Aut(X)\curvearrowright \cS_0$ has three elements, and each orbit of $\Aut(X)\curvearrowright \cS_1$ has six elements.
\end{lemma}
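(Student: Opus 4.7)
The plan is to determine $\Aut(X)$ and its action on $\cS$, then apply the orbit-stabilizer theorem.

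First, I would use the classical bijection between isotopy classes of essential simple closed curves on $T^*$ and primitive classes in $H_1(T^*;\bZ)\cong\bZ^2$ modulo sign, identified with $\bP^1(\bQ)=\bQ\cup\{\infty\}$ via the slope. Under this identification, isometries of $X$ act on $\cS$ by M\"obius transformations in $\PGL_2(\bZ)$; the hyperelliptic involution, which on homology is $-I$, acts trivially on $\cS$.

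Next, I would identify $\Aut(X)\cong S_3\times\bZ/2\bZ$, of order $12$: the $\bZ/2\bZ$-factor is the hyperelliptic involution, and the $S_3$-factor arises from the coordinate permutations of the character variety $\cX$ in \eqref{eq:CharacterVariety}, which fix the modular-torus point $(3,3,3)$. That no further symmetries exist is standard (see e.g.~\cite{Schmutz}).

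Then I would make the action on slopes explicit. The $S_3$-factor embeds in $\PGL_2(\bZ)$ as the subgroup generated by the order-$3$ element $z\mapsto(z-1)/z$ (cycling $\infty\to 1\to 0\to\infty$) and the order-$2$ element $z\mapsto 1/z$. The three transpositions are $z\mapsto 1/z$, $z\mapsto 1-z$, and $z\mapsto z/(z-1)$, with respective fixed slopes $\{+1,-1\}$, $\{\infty,1/2\}$, and $\{0,2\}$; the two order-$3$ elements have primitive sixth-root-of-unity fixed points and thus fix no rational slope. Using the Fricke identity $\tau(XY)+\tau(XY^{-1})=\tau(X)\tau(Y)$ on the modular torus (where $\tau(A)=\tau(B)=\tau(AB)=3$), the slopes $\{0,1,\infty\}$ yield trace $3$ (so $m=1$) while $\{-1,1/2,2\}$ yield trace $6$ (so $m=2$); by \eqref{eq:defLagrange} these six curves are precisely the shortest six, constituting $\cS_0$.

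Finally, the orbit-stabilizer theorem concludes: for $\gamma\in\cS_0$ the stabilizer in $\Aut(X)$ is generated by the hyperelliptic involution and the unique transposition fixing the slope of $\gamma$, so has order $4$ and the orbit has $12/4=3$ elements; for $\gamma\in\cS_1$ no non-identity element of $S_3$ fixes the slope of $\gamma$, so the stabilizer is just the hyperelliptic involution (order $2$) and the orbit has $12/2=6$ elements. The main obstacle is establishing $|\Aut(X)|=12$ exactly (not larger): the lower bound is immediate from the construction, but ruling out extra isometries either requires citing the structure of the modular torus's isometry group or arguing that any additional symmetry would create fixed slopes outside $\cS_0$, contradicting the M\"obius enumeration above.
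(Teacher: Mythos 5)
Your proof is correct and follows essentially the same route as the paper: both identify the action of $\Aut(X)$ on $\cS$ with the action of the order-six dihedral subgroup of $\PGL_2(\bZ)$ on slopes in $\bP^1(\bQ)$, locate the six slopes with nontrivial stabilizer, and match them to the six shortest geodesics. Your write-up is in fact more complete than the paper's, which computes the fixed points of only one reflection and asserts without verification that the six exceptional geodesics are the shortest; your Fricke-identity trace computation and the explicit list of all three transpositions fill in exactly those gaps, while your honest caveat about bounding $|\Aut(X)|$ from above matches an assertion the paper also leaves uncited.
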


\begin{proof}
There is an $\Aut(X)$-equivariant correspondence of $\cS$ with lines in $H:=H_1(X,\bZ)$. The standard generators $a, b$ of $\pi_1(X)\approx \pi_1(T^*)$ (as in \autoref{subsec:RelationModularTorus}) provide a basis of $H\approx \bZ^2$. 
The image of the homomorphism $\Aut(X)\to \PGL(2,\bZ)$ is the dihedral group with six elements, generated by
\begin{equation}
r=\begin{pmatrix}
0 & 1\\
-1 & -1
\end{pmatrix}
\text{ and }
\sigma=\begin{pmatrix}
0 & 1 \\
1 & 0
\end{pmatrix}\,.
\end{equation}
The actions of $r$ and $\sigma$ on $\bP^1 H$ have fixed points $\Fix(r)=\emptyset$ and $\Fix(\sigma)=\{[1:1],[1:-1]\}$. This implies that all simple closed geodesics on $X$ have six images under the action of $\Aut(X)$, except for the two geodesics corresponding to $ab$ and $ab^{-1}$, which have three such images apiece. 
These six geodesics are precisely the six shortest geodesics on $X$.
\end{proof}

Let us now prove \autoref{thm:MainThm}, in fact the slightly more precise version:

\begin{theorem} \label{thm:MainThmPrec}
We have $\displaystyle \sum_{n=1}^\infty (3-L_n) \leqslant 4 - \varphi - \sqrt{2}$, with equality if and only if \MUC{} holds.
\end{theorem}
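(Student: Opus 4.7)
The plan is to group the simple closed geodesics on $X$ by their Lagrange number and substitute into the rewritten McShane identity \eqref{eq:McShaneRe}. Writing $F_n := \{\gamma \in \cS : L(\gamma) = L_n\}$, the identity becomes
\[
3 \;=\; \sum_{n=1}^\infty |F_n|\,(3 - L_n),
\]
so the task reduces to controlling the fiber sizes $|F_n|$, and this is where \autoref{lem:Orbits} enters.

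Next, I would pin down the two exceptional fibers. The only Markov triples with largest entry $1$ or $2$ are $(1,1,1)$ and $(1,1,2)$, so the trivial cases $n=1,2$ of \MUC{} hold. By Schmutz's correspondence recalled in \autoref{subsec:RelationModularTorus}, this means $F_1$ and $F_2$ each consist of a single $\Aut(X)$-orbit; these are precisely the orbits of $ab$ and $ab^{-1}$, of traces $3$ and $6$ respectively. By \autoref{lem:Orbits} these two special orbits have three elements, giving $|F_1| = |F_2| = 3$. For $n \geqslant 3$, no orbit contained in $F_n$ can coincide with $[ab]$ or $[ab^{-1}]$, so again by \autoref{lem:Orbits} every orbit composing $F_n$ has size $6$; hence $|F_n|$ is a positive multiple of $6$, in particular $|F_n| \geqslant 6$, and \MUC{} is equivalent to $|F_n| = 6$ for every $n \geqslant 3$.

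Finally, substituting into the identity and using that each $L_n < 3$,
\[
3 \;=\; 3(3-L_1) + 3(3-L_2) + \sum_{n \geqslant 3} |F_n|(3-L_n) \;\geqslant\; 3(3-L_1) + 3(3-L_2) + 6\sum_{n \geqslant 3}(3-L_n),
\]
with equality if and only if \MUC{} holds. Solving for $\sum_{n=1}^\infty (3-L_n)$ and plugging in $L_1 = \sqrt 5 = 2\varphi - 1$ and $L_2 = 2\sqrt 2$ should yield
\[
\sum_{n=1}^\infty (3-L_n) \;\leqslant\; \frac{7 - L_1 - L_2}{2} \;=\; 4 - \varphi - \sqrt 2,
\]
again with equality iff \MUC{}. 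The only non-routine step is the identification of the two three-element orbits of \autoref{lem:Orbits} with $L_1$ and $L_2$ (equivalently, confirming no fiber $F_n$ with $n \geqslant 3$ can accidentally contain one of the small orbits), which is immediate from the trace computation; the remainder is arithmetic.
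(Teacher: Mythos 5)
Your proposal is correct and follows essentially the same route as the paper: both rest on the rewritten McShane identity \eqref{eq:McShaneRe}, the orbit-size count of \autoref{lem:Orbits}, nonemptiness of the fibers, and Schmutz's theorem, differing only in bookkeeping (you sum over the fibers $F_n$ directly, while the paper sums over $\Aut(X)$-orbits and then maps orbits onto Lagrange numbers). The arithmetic at the end matches the paper's computation exactly.
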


\begin{proof}
Recall that $X$ denotes the modular torus and $\cS$ the set of simple closed geodesics on $X$. Let $\cS / \Aut(X)$ indicate the set of $\Aut(X)$-orbits in $\cS$. By \eqref{eq:McShaneRe}, the McShane identity on $X$ is rewritten:
\begin{equation}
\sum_{\gamma \in \cS} \, (3-L(\gamma)) = \sum_{A\in\cS / \Aut(X)} \ \sum_{\gamma\in A} \ 3-L(\gamma) \, = \, 3\,.
\end{equation}
By \autoref{lem:Orbits}, the map $\gamma\mapsto \tau(\gamma)$ is $6$-to-$1$ for $\gamma\in\cS_1$ and $3$-to-$1$ for $\gamma\in \cS_0$. 
Therefore, we get
\begin{equation}
 \left(6\sum_{[\gamma]\in\cS_1/\Aut(X)}\, + \, 3\sum_{[\gamma]\in\cS_0/\Aut(X)} \right)  \, \left(3-L(\gamma)\right) \,=\, 3\,.
\end{equation}

The six curves in $\cS_0$ are the shortest geodesics in $\cS$, so the two Lagrange numbers they determine are the two smallest Lagrange numbers $L_1=\sqrt 5$ and $L_2=\sqrt 8$. The previous equality can be written
\begin{equation}
\left(6 \sum_{[\gamma] \in \cS / \Aut(X)} \, (3-L(\gamma))\right) - 3\bigg((3 - L_1) + (3-L_2)\bigg) \, = \, 3\,,
\end{equation}
which we rewrite:
\begin{equation}
\sum_{[\gamma] \in \cS / \Aut(X)} \, (3-L(\gamma)) \, = \, 4 - \varphi - \sqrt{2}\,.
\end{equation}

The map $[\gamma] \mapsto L(\gamma)$ from $\cS / \Aut(X)$ to the set of Lagrange numbers $\cL = \{L_n, \, n \in \bN\}$ is onto, and one-to-one if and only if \MUC{} holds (see discussion above \autoref{lem:Orbits}). The conclusion follows.
\end{proof}

\section{Numerical evidence} \label{sec:Numerical}

Numerical computation suggests that the series $\sum_{n=1}^\infty (3-L_n)$ indeed converges to $L = 4 - \varphi - \sqrt{2}$. Denoting $R_n \coloneqq L - \sum_{k=1}^n (3-L_k)$ the presumed remainder, we find for instance $R_n \approx 7.34169 \times 10^{-455}$ for $n=50\, 000$. 

\begin{remark}
 Of course, one can also check \MUC{} directly with an algorithm (see \eg{} \cite{Metz}). A short Python script took us less than a minute on a personal computer to check \MUC{} for all Markov numbers $m_n$ up to $10^{1000}$, \ie up to $n =  959\,047$. Nevertheless, it is nice to get a different confirmation.
\end{remark}

Pushing the analysis further, we obtain new numerical evidence of Zagier's estimate
$m_n \sim \frac{1}{3} e^{C \sqrt{n}}$ where $C = 2.3523414972...$\,. Let us recall that this estimate is still open but was proved in weaker forms in \cite{Zagier} and \cite{MR1340065}. Elementary calculus involving the comparison of the remainder $R_n$ with the integral $6 \int_n^{+\infty} e^{-2C \sqrt{t}} \upd t$ translates Zagier's estimate to $R_n \sim \frac{6 \sqrt{n}}{C} e^{-2C \sqrt{n}}$. On \autoref{fig:Plots} it appears that the graph of $R_n$ in Log scale is indeed asymptotic to the expected curve. 

\begin{remark}[Computer code]
We wrote a simple recursive algorithm in Python to generate the list of Markov numbers.
We then used Mathematica to compute the remainders $R_n$ up to $n=  50\,000$ and plot the graphs. Our code is freely available on GitHub \cite{GitHub}.
\end{remark}

\begin{figure}[!ht]
\centering
\begin{subfigure}{.5\textwidth}
  \centering
  \includegraphics[width=.98\textwidth]{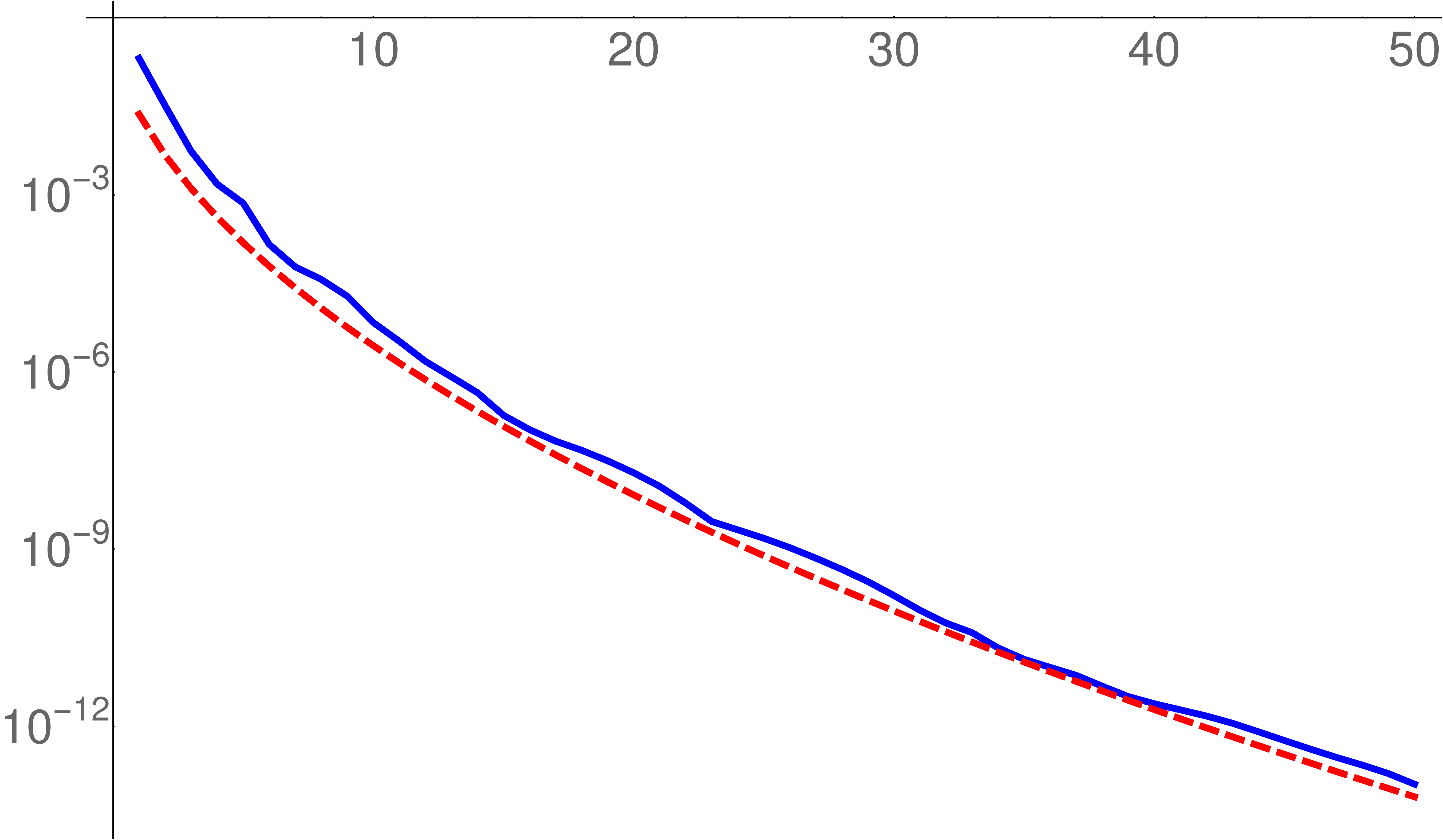}
  \caption{$n = 1 \dots 50$.}
  \label{fig:PlotA}
\end{subfigure}%
\begin{subfigure}{.5\textwidth}
  \centering
  \includegraphics[width=.98\textwidth]{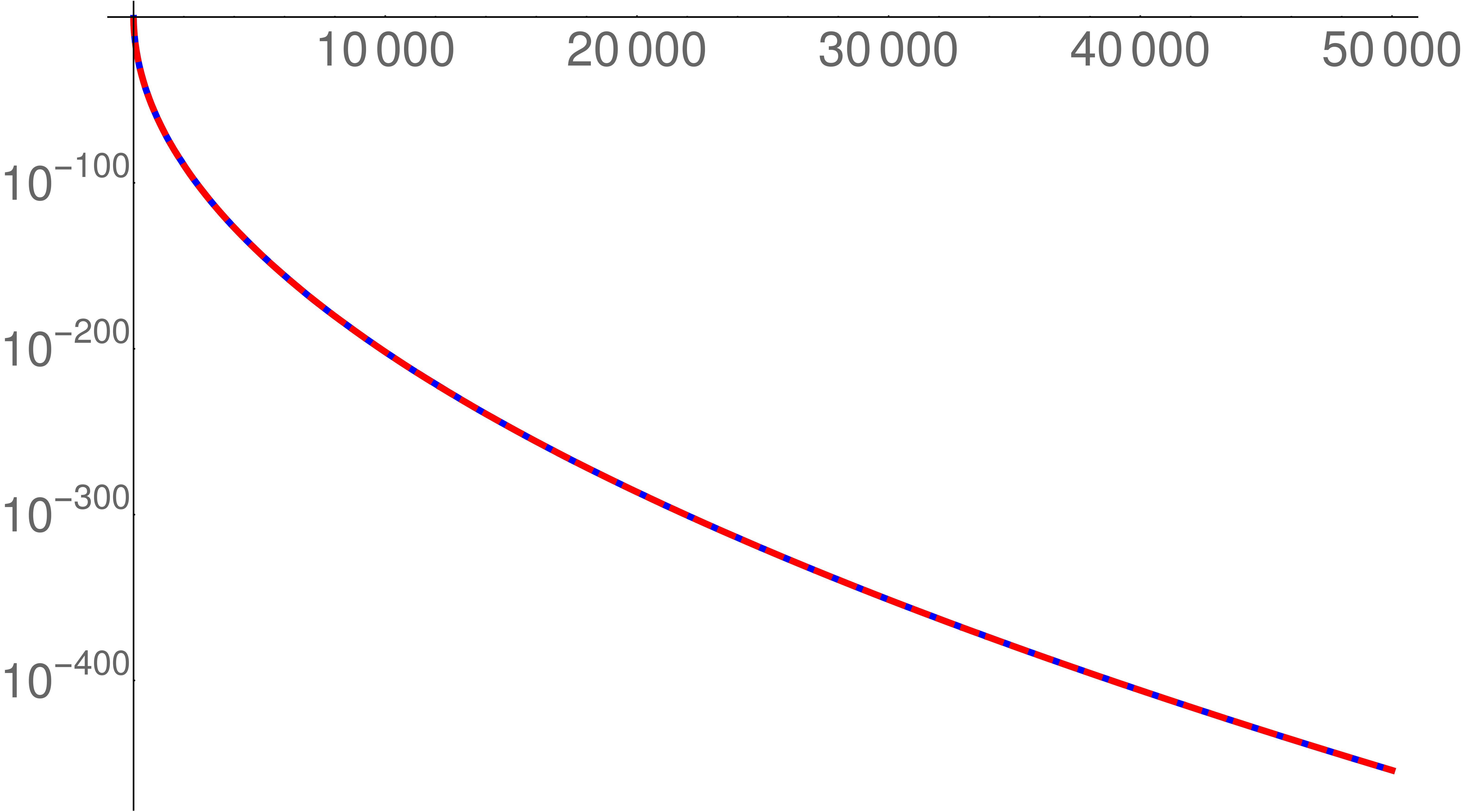}
  \caption{$n = 1 \dots 50\,000$.}
  \label{fig:PlotB}
\end{subfigure}
\caption{Numerical computation of the remainder $R_n = (4 - \varphi - \sqrt 2) - \sum_{k=1}^n (3-L_k)$.
The dashed curve shows the expected asymptotic profile $\frac{6 \sqrt{n}}{C} e^{-2C \sqrt{n}}$.}
\label{fig:Plots}
\end{figure}

\bigskip

\bibliographystyle{alpha}

\begin{thebibliography}{Bow96}

\bibitem[Aig15]{Aigner}
Martin Aigner.
\newblock {\em Markov's theorem and 100 years of the uniqueness conjecture}.
\newblock Springer, 2015.

\bibitem[Bow96]{MR1356829}
B.~H. Bowditch.
\newblock A proof of {M}c{S}hane's identity via {M}arkoff triples.
\newblock {\em Bull. London Math. Soc.}, 28(1):73--78, 1996.

\bibitem[CF89]{cusick-flahive}
Thomas~W Cusick and Mary~E Flahive.
\newblock {\em The {M}arkoff and {L}agrange spectra}.
\newblock Number~30. American Mathematical Soc., 1989.

\bibitem[Coh55]{MR67935}
Harvey Cohn.
\newblock Approach to {M}arkoff's minimal forms through modular functions.
\newblock {\em Ann. of Math. (2)}, 61:1--12, 1955.

\bibitem[Fro13]{frobenius}
Georg~Ferdinand Frobenius.
\newblock {\em {\"U}ber die Markoffschen Zahlen}.
\newblock K{\"o}nigliche Akademie der Wissenschaften, 1913.

\bibitem[Gol03]{Goldman}
William~M Goldman.
\newblock The modular group action on real {SL}(2)--characters of a one-holed
  torus.
\newblock {\em Geometry \& Topology}, 7(1):443--486, 2003.

\bibitem[Gor81]{Gorshkov}
D.~S. Gorshkov.
\newblock Geometry of {L}obachevskii in connection with certain questions of
  arithmetic.
\newblock {\em Journal of Soviet Mathematics}, 16:788--820, 1981.

\bibitem[Guy83]{Guy}
Richard~K Guy.
\newblock Don't try to solve these problems!
\newblock {\em The American Mathematical Monthly}, 90(1):35--41, 1983.

\bibitem[js20]{GitHub}
jbgaster and seub.
\newblock {GitHub} repository: {LagrangeSeries}, 2020.
\newblock URL: \url{https://github.com/seub/LagrangeSeries}.

\bibitem[LT07]{Lang-Tan}
Mong~Lung Lang and Ser~Peow Tan.
\newblock A simple proof of the {M}arkoff conjecture for prime powers.
\newblock {\em Geometriae Dedicata}, 129(1):15--22, 2007.

\bibitem[Mar79]{Markoff}
Andrey Markoff.
\newblock Sur les formes quadratiques binaires ind{\'e}finies.
\newblock {\em Mathematische Annalen}, 15(3-4):381--406, 1879.

\bibitem[Mar80]{MR1510073}
Andrey Markoff.
\newblock Sur les formes quadratiques binaires ind\'{e}finies {II}.
\newblock {\em Mathematische Annalen}, 17(3):379--399, 1880.

\bibitem[McS98]{McShane}
Greg McShane.
\newblock Simple geodesics and a series constant over {T}eichm\"uller space.
\newblock {\em Inventiones Mathematicae}, 132(3):607--632, 1998.

\bibitem[Met15]{Metz}
Brandon~John Metz.
\newblock {\em A Comparison of Recent Results on the Unicity Conjecture of the
  {M}arkoff Equation}.
\newblock Master thesis in {M}athematics, {U}niversity of {N}evada, {L}as
  {V}egas, 2015.
\newblock URL:
  \url{https://digitalscholarship.unlv.edu/cgi/viewcontent.cgi?article=3390&context=thesesdissertations}.

\bibitem[MR95]{MR1340065}
Greg McShane and Igor Rivin.
\newblock Simple curves on hyperbolic tori.
\newblock {\em C. R. Acad. Sci. Paris S\'{e}r. I Math.}, 320(12):1523--1528,
  1995.

\bibitem[Sch96]{Schmutz}
Paul Schmutz.
\newblock Systoles of arithmetic surfaces and the markoff spectrum.
\newblock {\em Mathematische Annalen}, 305(1):191--203, 1996.

\bibitem[Ser85]{Series}
Caroline Series.
\newblock The geometry of {M}arkoff numbers.
\newblock {\em The Mathematical Intelligencer}, 7(3):20--29, 1985.

\bibitem[Zag82]{Zagier}
Don Zagier.
\newblock On the number of {M}arkoff numbers below a given bound.
\newblock {\em Mathematics of Computation}, 39(160):709--723, 1982.

\end{thebibliography}

\end{document}